\documentclass{article}
\usepackage{colortbl}
\usepackage[utf8]{inputenc}
\usepackage{amsmath,stmaryrd}
\usepackage{amsthm}
\usepackage{amssymb}
\usepackage{verbatim}
\usepackage[english]{babel}
\usepackage{lipsum}
\usepackage{titleps}
\usepackage{pifont} % za kvačicu
\usepackage{mathrsfs}
\usepackage{mathtools}
\usepackage{footnote}
\usepackage{tikz}
\usetikzlibrary{automata,positioning} % za grafove
\usepackage{tkz-euclide} % za skice geometrije
\usepackage{xcolor} % za bojanje
\usepackage{lmodern} % za fontove
\usepackage{bbm} % za jedinicu indicator function
\usepackage{cancel} % za prekrižiti
\usepackage{array}
\usepackage{multirow}
\usepackage{wrapfig,subcaption}
\usepackage[customcolors,shade]{hf-tikz} % za bojanje
\usepackage{mathdots} % za tockice

\newcolumntype{L}[1]{>{\raggedright\let\newline\\\arraybackslash\hspace{0pt}}m{#1}}
\newcolumntype{C}[1]{>{\centering\let\newline\\\arraybackslash\hspace{0pt}}m{#1}}
\newcolumntype{R}[1]{>{\raggedleft\let\newline\\\arraybackslash\hspace{0pt}}m{#1}}

\newcommand{\probm}[3]{\mathbb{P}^{#1}_{#2}\left( #3 \right)}

\newcommand{\indi}[1]{\mathbbm{1}\left[ #1 \right]}

\newcommand{\bigo}[1]{O\left( #1 \right)}

\newcommand{\abs}[1]{\left\lvert #1 \right\rvert}

\newcommand{\brap}[1]{\left( #1 \right)}
\newcommand{\bras}[1]{\left[ #1 \right]}
\newcommand{\brac}[1]{\left\{ #1 \right\}}

%Tom's commands
\newcommand{\eps}{\varepsilon}
\renewcommand{\P}{\mathbb{P}}

\newtheorem{theorem}{Theorem}
\newtheorem{proposition}{Proposition}
\newtheorem{lemma}[proposition]{Lemma}

\newtheorem*{remark}{Remark}
\newtheorem{conjecture}{Conjecture}

\newcommand{\newnode}[4]{
    \node[shape=circle,inner sep=0pt, minimum size=.1cm,fill=black,label=below:{\scriptsize $#4$}] (#1) at (#2,#3) {};}

\newcommand{\newpairofnodes}[7]{
    \node[shape=circle,inner sep=0pt, minimum size=.1cm,fill=#7,label=below:{\scriptsize $\color{#7}#5$}] (0#1) at (#2,#3) {};
    \node[shape=circle,inner sep=0pt, minimum size=.1cm,fill=#7,label=above:{\scriptsize $\color{#7}#6$}] (1#1) at (#2,#4) {};}

\newcommand{\newhorizontalpaths}[5]{
    \path [-,#4] (0#1) edge[#5] node[fill=white,circle,inner sep=0.1pt] {\scriptsize $#3$} (0#2);
    \path [-,#4] (1#1) edge[#5] node[fill=white,circle,inner sep=0.1pt] {\scriptsize $#3$} (1#2);}

\newcommand{\newhorizontalpathsone}[6]{
    \path [-,#4,thick] (#5#1) edge node[fill=white,circle,inner sep=0.1pt] {\scriptsize $#3$} (#5#2);
    \path [-,#4,dashed] (#6#1) edge node[fill=white,circle,inner sep=0.1pt] {\scriptsize $#3$} (#6#2);}

\newcommand{\newverticalpath}[3]{
    \path [-,#3] (0#1) edge node[fill=white,circle,inner sep=0.1pt] {\scriptsize $#2$} (1#1);}

\newcommand{\newpath}[4]{
    \path [-,#3] (#1) edge[#4] (#2);}
    
\newcommand{\newonepath}[5]{
    \path [-,#4] (#1) edge[#5] node[fill=white,circle,inner sep=0.1pt] {\scriptsize $#3$} (#2);}

\definecolor{darkyellow}{RGB}{255,200,0}

\usepackage{hyperref}
\hypersetup{
    colorlinks=true,
    linkcolor=blue,
    filecolor=magenta,
    urlcolor=red,
}
\usepackage{cleveref}
\urlstyle{same}

\usepackage[top=2cm,bottom=2cm,left=2cm,right=2cm]{geometry}
\setlength{\parindent}{0cm}
\setlength{\parskip}{5pt}
\makeatletter 
\newcommand{\@minipagerestore}{
\setlength{\parindent}{0cm}
\setlength{\parskip}{5pt}}
\makeatother

\AtEndDocument{
\bigskip
\small
\par
\textsc{Tom Hutchcroft, The Division of Physics, Mathematics and Astronomy, California Institute of Technology, Pasadena, CA 91125, USA} \par
\textit{Email:} \texttt{t.hutchcroft@caltech.edu} \par

\medskip

\textsc{Petar Nizić-Nikolac and Alexander Kent, University of Cambridge, Cambridge, CB3 0WB, UK} \par
\textit{Email:} \texttt{pn310@cam.ac.uk} and \texttt{ak2147@cantab.ac.uk} \par
}

\title{The bunkbed conjecture holds in the $p\uparrow 1$ limit}
\author{Tom Hutchcroft \and Petar Nizić-Nikolac \and Alexander Kent}
\date{September 2021}

\begin{document}

\maketitle

\begin{abstract}
    Let $G=(V,E)$ be a countable graph. The Bunkbed graph of $G$ is the product graph $G \times K_2$, which has vertex set $V\times \{0,1\}$ with ``horizontal'' edges inherited from $G$ and additional ``vertical'' edges connecting $(w,0)$ and $(w,1)$ for each $w \in V$. Kasteleyn's Bunkbed conjecture states that for each $u,v \in V$ and $p\in [0,1]$, the vertex $(u,0)$ is at least as likely to be connected to $(v,0)$ as to $(v,1)$ under Bernoulli-$p$ bond percolation on the bunkbed graph. We prove that the conjecture holds in the $p \uparrow 1$ limit in the sense that for each finite graph $G$ there exists $\varepsilon(G)>0$ such that the bunkbed conjecture holds for $p \geqslant 1-\eps(G)$.
\end{abstract}

\section{Introduction}

In \textbf{Bernoulli bond percolation}, the edges of a countable graph $G=(V,E)$ (which we allow to contain self-loops and/or multiple edges) are each deleted or retained  independently at random with retention probability $p\in [0,1]$. We call retained edges \textbf{open}, deleted edges \textbf{closed}, and write $\P_p=\P^G_p$ for the law of the resulting random subgraph. Percolation theorists seek to understand the geometry of the connected components of the random subgraph that remains, and how this geometry depends on the parameter $p$. Despite the simplicity of the model, this is a rich subject with many connections to other topics in mathematics, physics, and computer science; see e.g.\ \cite{MR3751350,MR1707339} for background and overview.

When studying percolation, it is often important to understand the behaviour of connection probabilities between vertices, also known as the \textbf{two-point function}. Situations often arise in which it is easier to bound \emph{averages} of the two-point function (e.g.\ over a box in the hypercubic lattice) than it is to prove pointwise bounds; see for instance the thirteen-year gap between Hara and Slade's proof of mean-field behaviour for the \emph{Fourier transform} of the two-point function for high-dimensional percolation in 1990 \cite{MR1043524} and the subsequent proof of pointwise bounds by Hara, van der Hofstad, and Slade in 2003 \cite{MR1959796}.  As such, it would be very useful to have general techniques to convert averaged bounds into pointwise bounds. Such a conversion would be straightforward if one knew that the connection probability from the origin to $x=(x_1,\ldots,x_d)$ on the hypercubic lattice $\mathbb{Z}^d$ is a decreasing function of $|x_i|$ for each $i=1,\ldots,d$. Unfortunately, this intuitively plausible statement seems to be out of reach of present methods, although an analogous statement is known to hold for the Ising model \cite{MR676312}.

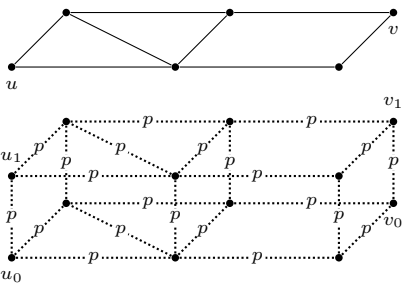
\begin{wrapfigure}{l}{0.35\textwidth}
    \vspace{-0.7cm}
    \begin{center}
    \begin{tikzpicture}[scale=0.725]
        \def\y{3.5}
        
        \newnode{a}{0}{0+\y}{u}
        \newnode{b}{3}{0+\y}{}
        \newnode{c}{6}{0+\y}{}
        \newnode{d}{1}{1+\y}{}
        \newnode{e}{4}{1+\y}{}
        \newnode{f}{7}{1+\y}{v}
        
        \newpath{a}{b}{}{}
        \newpath{b}{c}{}{}
        \newpath{c}{f}{}{}
        \newpath{d}{e}{}{}
        \newpath{e}{f}{}{}
        \newpath{a}{d}{}{}
        \newpath{b}{d}{}{}
        \newpath{b}{e}{}{}
        
        \newpairofnodes{a}{0}{0}{1.5}{u_0}{u_1}{black}
        \newpairofnodes{b}{3}{0}{1.5}{}{}{black}
        \newpairofnodes{c}{6}{0}{1.5}{}{}{black}
        \newpairofnodes{d}{1}{1}{2.5}{}{}{black}
        \newpairofnodes{e}{4}{1}{2.5}{}{}{black}
        \newpairofnodes{f}{7}{1}{2.5}{v_0}{v_1}{black}
        
        \newverticalpath{a}{p}{thick, densely dotted}
        \newverticalpath{b}{p}{thick, densely dotted}
        \newverticalpath{c}{p}{thick, densely dotted}
        \newverticalpath{d}{p}{thick, densely dotted}
        \newverticalpath{e}{p}{thick, densely dotted}
        \newverticalpath{f}{p}{thick, densely dotted}
        
        \newhorizontalpaths{a}{b}{p}{thick, densely dotted}{}
        \newhorizontalpaths{b}{c}{p}{thick, densely dotted}{}
        \newhorizontalpaths{c}{f}{p}{thick, densely dotted}{}
        \newhorizontalpaths{d}{e}{p}{thick, densely dotted}{}
        \newhorizontalpaths{e}{f}{p}{thick, densely dotted}{}
        \newhorizontalpaths{a}{d}{p}{thick, densely dotted}{}
        \newhorizontalpaths{b}{d}{p}{thick, densely dotted}{}
        \newhorizontalpaths{b}{e}{p}{thick, densely dotted}{}
    \end{tikzpicture}
    \caption{A graph $G$ (top) and the bunkbed model on $G$ (bottom).}
    \end{center}
    \vspace{-1.0cm}
\end{wrapfigure}

These considerations lend motivation to the \emph{bunkbed conjecture}, which was first formulated by Kasteleyn in the mid 1980s \cite[Remark 5]{MR1825144}. Given a graph $G=(V,E)$, the \textbf{bunkbed graph} is defined to be the Cartesian product $G\times K_2$ of $G$ and the graph with one edge (sometimes known as the box product and denoted $G \boxempty K_2$), which has vertex set $V\times \{0,1\}$ with ``horizontal'' edges inherited from $G$ and additional ``vertical'' edges connecting $(w,0)$ and $(w,1)$ for each $w \in V$. We refer to Bernoulli percolation on $G \times K_2$ as the \textbf{bunkbed model} and write $\mathbb{P}_p^\mathrm{bb}=\P_p^{G\times K_2}$ for its law. To lighten notation, we will often write $v_i=(v,i)$ for $v\in V$ and $i\in \{0,1\}$.

\begin{conjecture}[Bunkbed conjecture]\label{conj:bunkbed}
    Let $G=(V,E)$ be a finite graph. Then $\probm{\mathrm{bb}}{p}{u_0 \leftrightarrow v_0} \geqslant \probm{\mathrm{bb}}{p}{u_0 \leftrightarrow v_1}$ for every $p \in \bras{0,1}$ and $u,v\in V$.
\end{conjecture}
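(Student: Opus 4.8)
The plan is to establish the stronger \emph{coefficientwise} domination, which then yields the desired inequality simultaneously for every $p\in[0,1]$. Write $m=\abs{E(G\times K_2)}$ and, for a percolation event $A$, let $N_j(A)$ be the number of configurations in $A$ with exactly $j$ open edges, so that $\probm{\mathrm{bb}}{p}{A}=\sum_j N_j(A)\,p^{j}(1-p)^{m-j}$. Whenever the vertical edge at $v$ is open we have $v_0\leftrightarrow v_1$, and hence $u_0\leftrightarrow v_0$ if and only if $u_0\leftrightarrow v_1$; the discrepancy between the two connection probabilities is therefore carried entirely by the events
\[
    D=\set{u_0\leftrightarrow v_0,\ u_0\not\leftrightarrow v_1}\qquad\text{and}\qquad D'=\set{u_0\leftrightarrow v_1,\ u_0\not\leftrightarrow v_0},
\]
on both of which the vertical edge at $v$ is necessarily closed, and we have the exact identity $\probm{\mathrm{bb}}{p}{u_0\leftrightarrow v_0}-\probm{\mathrm{bb}}{p}{u_0\leftrightarrow v_1}=\sum_j\bras{N_j(D)-N_j(D')}p^{j}(1-p)^{m-j}$. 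Since each monomial $p^{j}(1-p)^{m-j}$ is nonnegative on $[0,1]$, it suffices to produce an open-edge-count-preserving injection $\Phi\colon D'\to D$; equivalently, to show $N_j(D)\ge N_j(D')$ for every $j$.

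First I would try to construct $\Phi$ by a \emph{partial reflection} of the fibre structure. For $\omega\in D'$ the cluster $C$ of $u_0$ contains $v_1$ but not $v_0$, and the idea is to reflect a canonically chosen vertex set $S\subseteq V$ across the two levels---the reflection $R_S$ swaps $w_0\leftrightarrow w_1$ for $w\in S$ and relabels edges accordingly---so as to flip the parity of the number of vertical edges traversed on the connection from $u_0$ to $v$, thereby rerouting the $u_0$--$v_1$ connection into a $u_0$--$v_0$ connection without changing the total number of open edges. Choosing $S$ from an intrinsic feature of $\omega$, such as the ``last crossing'' of a prescribed exploration of $C$, should make the resulting map an involution on its image and hence injective.

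The main obstacle, which I expect to be the crux of the entire problem, is that partial reflection does \emph{not} respect the bunkbed graph. An open horizontal edge $\set{w,w'}$ of $G$ with $w\in S$ and $w'\notin S$ lifts to $\set{w_0,w'_0}$ and $\set{w_1,w'_1}$, which $R_S$ sends to $\set{w_1,w'_0}$ and $\set{w_0,w'_1}$---diagonals that are not edges of $G\times K_2$. Thus $R_S$ is weight-preserving and admissible only when no open horizontal edge crosses $\partial S$, and the real difficulty is to select $S$ so that this admissibility holds \emph{while} the vertical-crossing parity flips \emph{and} injectivity is preserved, all canonically and for every graph. I expect that no single such rule works in full generality---this is precisely what has kept the conjecture open---so the honest targets are: (i) to carry out the reflection under structural hypotheses that force the relevant clusters to meet $\partial S$ in a controlled way (as for trees, series--parallel graphs, or graphs in which a small vertex cut separates $u$ from $v$); and (ii) in the absence of a uniform injection, to retreat to the $p\uparrow1$ regime, where expanding in $q=1-p$ about the all-open configuration reduces the comparison to the combinatorics of minimal edge cutsets separating $u$ from $v$ and removes the need for any global combinatorial bijection.
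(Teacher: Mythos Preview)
The paper does not prove this statement: it is stated as an open conjecture, and only the special case $p\geqslant 1-2^{-\abs{E}/2-2}$ (Theorem~\ref{thm:main}) is established. Your main line of attack---a coefficientwise injection $\Phi\colon D'\to D$ via partial reflection $R_S$---is not completed, and you correctly isolate the obstruction: horizontal edges crossing $\partial S$ are sent to diagonals that are not edges of $G\times K_2$, so $R_S$ is admissible only when no open horizontal crosses $\partial S$, and there is no canonical choice of $S$ that simultaneously enforces this, flips the vertical-crossing parity, and is injective. This is not a technicality to be patched: the coefficientwise inequality $N_j(D)\geqslant N_j(D')$ would imply the conjecture for all $p$, and the conjecture has since been \emph{disproved} (Gladkov--Pak--Zimin, 2024), so no such injection exists in general.

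Your fallback (ii)---expand in $q=1-p$ about the all-open configuration and reduce to minimal cutsets---is exactly the naive approach the paper explicitly rules out in Section~\ref{subsec:example}. On the path $G_n$ one has $A_n-B_n=q^{n+1}(1-q)^n$, while the $q$-expansions of $A_n$ and $B_n$ agree through order $q^n$; the inequality is invisible at every fixed finite order in $q$, so an expansion around the all-open state cannot work uniformly in $G$. The paper's route to Theorem~\ref{thm:main} is genuinely different: condition on the set $T$ of open verticals and on the tripartition $S=(S_0,S_1,S_2)$ of $E$ by how many of the two horizontal copies are open, reducing to Linusson's alternative bunkbed model on the contracted graph $G(s)$. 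The contribution $F(s)$ vanishes when $G(s)$ has no vertical-free $u$--$v$ path (Lemma~\ref{lem:mirroring}, a mirroring argument that is the \emph{correct} realisation of your reflection idea---it works there precisely because every edge of $G(s)$ lives at exactly one level, so the boundary-diagonal problem disappears), is bounded below by $2^{1-\abs{s_1}}$ when such a path has length at most $1$ (Lemma~\ref{lem:Splus}), and the remaining tripartitions have negligible total mass for $p$ near $1$ (Lemma~\ref{lem:Sminus}). It is this decomposition, not a power series in $q$, that yields the $p\uparrow 1$ result.
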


(In fact Kastelyn may have conjectured the stronger statement that this inequality remains true after conditioning on which vertical edges are present; see Theorem \ref{thm:mainprime} below.)
Despite its simple and intuitive statement, and proofs of analogous statements for the Ising model and simple random walk by H\"aggstr\"om \cite{MR1680084,haggstrom2003probability},  the bunkbed conjecture has so far evaded resolution. Indeed, even the case in which $G$ is the complete graph was verified only in the recent work of Van Hintum and Lammers \cite{MR3886521} following partial results of De Buyer \cite{deBuyer2016,deBuyer2018}. Further special cases of the conjecture have been studied by Leander \cite{Leander} and Linusson\footnote{Parts of the work of Linusson, including his analysis of outerplanar graphs, were unfortunately subject to a serious error as detailed in the erratum \cite{MR4015661}.}~\cite{MR2745680}, and interesting related correlation inequalities inspired by the bunkbed conjecture have been developed in the work of Van den Berg, H\"aggstr\"om, and Kahn \cite{MR2268229,MR1825144}.

The goal of this paper is to prove that the bunkbed conjecture holds (with strict inequality) in the limit as $p \uparrow 1$. We have not attempted to optimize the resulting constants.

\begin{theorem}\label{thm:main}
    Let $G=(V,E)$ be a finite, connected graph. Then $\probm{\mathrm{bb}}{p}{u_0 \leftrightarrow v_0} > \probm{\mathrm{bb}}{p}{u_0 \leftrightarrow v_1}$ for every $u,v\in V$ and $1-2^{-\abs{E}/2-2} \leqslant p<1$.
\end{theorem}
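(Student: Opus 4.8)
The plan is a short reduction followed by a reflection argument.

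\emph{Step 1: reduce to a graph with no vertical edges at the endpoints.} Condition on the states of the two vertical edges $e_u=\set{u_0,u_1}$ and $e_v=\set{v_0,v_1}$. If $e_v$ is open then $v_0$ and $v_1$ always lie in the same cluster, so $\set{u_0\leftrightarrow v_0}$ and $\set{u_0\leftrightarrow v_1}$ coincide on that event; if $e_v$ is closed but $e_u$ is open, the level‑swapping automorphism $\sigma\colon(w,i)\mapsto(w,1-i)$ fixes $e_u$ and $e_v$ and carries $\set{u_0\leftrightarrow v_0}$ to $\set{u_1\leftrightarrow v_1}=\set{u_0\leftrightarrow v_1}$, so the two conditional probabilities again agree. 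Hence only the term with both $e_u,e_v$ closed survives, giving
\[
\probm{\mathrm{bb}}{p}{u_0\leftrightarrow v_0}-\probm{\mathrm{bb}}{p}{u_0\leftrightarrow v_1}=(1-p)^2\brap{\probm{H}{p}{u_0\leftrightarrow v_0}-\probm{H}{p}{u_0\leftrightarrow v_1}},
\]
where $H$ is $G\times K_2$ with $e_u$ and $e_v$ deleted. Since $(1-p)^2>0$ for $p<1$ it suffices to prove the strict inequality for $H$; note that in $H$ the four vertices $u_0,u_1,v_0,v_1$ are distinct, there is no vertical edge at $u$ or $v$, and $\sigma$ is still an automorphism.

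\emph{Step 2: cancel the common ``doubly connected'' event and reflect.} Put $A=\set{u_0\leftrightarrow v_0,\ u_0\not\leftrightarrow v_1}$ and $B=\set{u_0\leftrightarrow v_1,\ u_0\not\leftrightarrow v_0}$, so that $\probm{H}{p}{u_0\leftrightarrow v_0}-\probm{H}{p}{u_0\leftrightarrow v_1}=\P\brap{A}-\P\brap{B}$ and the goal is $\P(A)>\P(B)$. The heart is a measure‑preserving map on $B$: given $\omega\in B$, fix the lexicographically least open path $\gamma$ from $u_0$ to $v_1$; since $H$ has no vertical edge at $v$, the path $\gamma$ must use some vertical edge, and if $e_w$ is its last one then the final segment $\gamma_2$ of $\gamma$ runs from $w_1$ to $v_1$ using only level‑$1$ horizontal edges. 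Let $\bar\gamma_2$ be its mirror image, a path from $w_0$ to $v_0$ on level $0$; because $u_0\not\leftrightarrow v_0$ in $\omega$, at least one edge of $\bar\gamma_2$ is closed while its level‑$1$ partner, lying on $\gamma_2$, is open. Define $\Phi(\omega)$ by exchanging the states of the level‑$0$ and level‑$1$ copies of every horizontal edge used by $\gamma_2$. This preserves the number of closed edges, so $\P(\Phi(\omega))=\P(\omega)$; and because $\gamma$ is a simple path the exchange leaves the initial segment from $u_0$ to $w_0$ entirely open while opening all of $\bar\gamma_2$, so $u_0\leftrightarrow v_0$ in $\Phi(\omega)$. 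I would arrange all choices canonically so that $\Phi$ is injective.

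\emph{Step 3: the leakage estimate, where $p\uparrow1$ enters.} The obstruction is that $\Phi(\omega)$ need not satisfy $u_0\not\leftrightarrow v_1$, since destroying $\gamma$ does not preclude some other open path to $v_1$; this is exactly where the bunkbed conjecture resists a fully general argument. Writing $B^{\circ}=\set{\omega\in B:\Phi(\omega)\in A}$, the map $\Phi$ restricts to a measure‑preserving injection $B^{\circ}\to A$, whence
\[
\P\brap{A}-\P\brap{B}=\P\brap{A\setminus\Phi(B^{\circ})}-\P\brap{B\setminus B^{\circ}} .
\]
One must show the ``uncovered'' term beats the ``leakage'' term. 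For the uncovered term I would exhibit a fixed positive‑probability family in $A$ disjoint from the image — for instance configurations in which every level‑$1$ horizontal edge incident to a neighbour of $v$ is closed, so that $v_1$ is isolated while $u_0$ reaches $v_0$ through a minimum cutset on level $0$ — contributing at least order $(1-p)^{k}$, where $k$ is the size of a minimum $u_0$–$v_0$ edge cutset in $H$. For the leakage term I would argue that a surviving $u_0$–$v_1$ connection in $\Phi(\omega)$ forces $\omega$ to contain, besides a $u_0$–$v_0$ cutset, a second essentially disjoint near‑cutset, so that $\P(B\setminus B^{\circ})$ is at most $(1-p)^{k+1}$ times a combinatorial factor; bounding that factor by summing over the at most $2^{\abs{E}}$ possible reflection patterns (subsets of $E(G)$) and extracting a square root by Cauchy–Schwarz produces $2^{\abs{E}/2}$. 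The two contributions balance in favour of $A$ precisely when $1-p<2^{-\abs{E}/2-2}$, which is the asserted range; no attempt is made to optimise the constant. (If the minimum $u_0$–$v_1$ cutset in $H$ is strictly smaller than the minimum $u_0$–$v_0$ cutset, then $\P(A)$ and $\P(B)$ already have different orders in $1-p$ and the inequality is immediate.)

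The main obstacle is the leakage estimate in Step 3: quantifying how rarely, for $p$ near $1$, the reflection fails to sever the $u_0$–$v_1$ connection, together with verifying injectivity of $\Phi$. It is exactly the smallness of $1-p$ — which concentrates the mass of $B$ on configurations with few closed edges, close to the minimal cutsets, where the reflection is well behaved — that makes this affordable; for $p$ bounded away from $1$ no such control is available, which is why the scheme yields only the $p\uparrow1$ case. Step 1 and the choice of witnesses in $A$ are routine by comparison.
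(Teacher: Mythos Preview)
Your approach differs substantially from the paper's, and Steps~2--3 contain gaps that are not merely missing details but missing ideas. In Step~2 you assert that $\Phi$ can be made injective ``by arranging all choices canonically,'' but you give no mechanism for recovering $\omega$ from $\Phi(\omega)$: after the swap the witness path $\gamma$ is typically no longer open in $\Phi(\omega)$, and the lexicographically least $u_0$--$v_0$ path in $\Phi(\omega)$ need not coincide with the concatenation of the initial segment of $\gamma$ and the mirror of $\gamma_2$, so there is no evident way to read off which base edges were swapped. Step~3 is more serious. The claim that leakage forces $\omega$ to contain ``a second essentially disjoint near-cutset'' is unsupported: a surviving $u_0$--$v_1$ path in $\Phi(\omega)$ can use edges disjoint from both $\gamma_2$ and its mirror, hence edges whose states are unchanged from $\omega$, imposing no additional closed-edge constraint. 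The Cauchy--Schwarz step producing $2^{\abs{E}/2}$ is not explained at all. Finally, the order-of-magnitude picture is off: for the path of length $n$ (the paper's worked example in \S\ref{subsec:example}) one has $\P(A)-\P(B)=(1-p)^{n+1}p^{n}$ exactly, whereas the minimum $u_0$--$v_0$ cutset in your $H$ has size~$1$ (since $u_0$ has degree~$1$ there); the inequality is therefore not governed by a fixed-order cutset comparison, which is precisely the phenomenon the paper highlights as ruling out naive perturbation.

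For contrast, the paper avoids path-surgery entirely. It conditions on the \emph{full} set $T$ of open vertical edges and additionally on the tripartition $S=(S_0,S_1,S_2)$ of base edges according to whether $0$, $1$, or $2$ of their horizontal copies are open; this reduces the problem to Linusson's alternative bunkbed model on the contracted/deleted graph $G(s)$. Tripartitions are then classified by $d(s)$, the vertical-free distance from $\pi_s(u)$ to $\pi_s(v)$ in $G(s)$: a clean global mirroring (flipping all edges on the $v$-side of a $t$-separator) gives $F(s)=0$ when $d(s)=\infty$; a one-line check gives $F(s)\geqslant 2^{1-\abs{E}}$ when $d(s)\leqslant 1$; and one uses only the trivial bound $F(s)\geqslant -2$ when $2\leqslant d(s)<\infty$. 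The sole quantitative input is that $d(s)\geqslant 2$ forces $\abs{s_1}\geqslant 2$, whence $\probm{\mathrm{bb}}{p}{S\in\mathcal S_{\geqslant 2}\mid T=t}\leqslant 2^{-\abs{E}}\probm{\mathrm{bb}}{p}{S\in\mathcal S_{\leqslant 1}\mid T=t}$ once $1-p\leqslant 2^{-\abs{E}/2-2}$. No injection between configurations is needed, and the $2^{\abs{E}/2}$ in the threshold arises transparently from this last estimate rather than from an unexplained Cauchy--Schwarz.
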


The complementary fact that the conjecture holds in the limit as $p \downarrow 0$ holds for trivial reasons. Indeed, if we consider Bernoulli bond percolation between any two vertices $u$ and $v$ on a finite graph $G=(V,E)$, then one can easily see that
\begin{equation}
    \mathbb{P}^G_p(u \leftrightarrow v) = \sum_{\omega \in \Omega} \indi{u \overset{\omega}{\leftrightarrow} v} p^{\#\text{open}}\brap{1-p}^{\#\text{closed}} = \#\{\text{geodesics from $u$ to $v$}\} \cdot p^{d(u,v)} \pm \bigo{p^{d\brap{u, v}+1}} 
\end{equation}
where $d(u,v)$ is the graph distance between $u$ and $v$, i.e., the length of the shortest path connecting $u$ to~$v$. Since the graph distances in the bunkbed graph satisfy $d(u_0,v_1)=d(u_0,v_0)+1$ for every $u,v\in V$, it follows that the lowest order non-zero coefficient of $p$ in $\probm{\mathrm{bb}}{p}{u_0 \leftrightarrow v_0}$ occurs earlier than in $\probm{\mathrm{bb}}{p}{u_0 \leftrightarrow v_1}$, so that $\probm{\mathrm{bb}}{p}{u_0 \leftrightarrow v_1} \ll \probm{\mathrm{bb}}{p}{u_0 \leftrightarrow v_0}$ as $p\downarrow 0$ and the conjecture holds in the $p\downarrow 0$ limit as claimed. We show in Section~\ref{subsec:example} that Theorem~\ref{thm:main} cannot be deduced by a similar perturbative analysis, since it is possible for $\abs{\probm{\mathrm{bb}}{p}{u_0 \nleftrightarrow v_0}-\probm{\mathrm{bb}}{p}{u_0 \nleftrightarrow v_1}}$ to be much smaller than any power of $\probm{\mathrm{bb}}{p}{u_0 \nleftrightarrow v_0}$ or $\probm{\mathrm{bb}}{p}{u_0 \nleftrightarrow v_1}$ as $p\uparrow 1$.

Note that if one could find a single value of $p$ such that the bunkbed conjecture held for \emph{every} finite graph at $p$, then the bunkbed conjecture would hold for every graph $G$ and every  $p$ by a theorem of Rudzinski and Smyth \cite{rudzinski2016equivalent}.

\medskip

\textbf{Acknowledgments.} This paper is the result of an undergraduate summer research project at the University of Cambridge in the summer of 2020, where PNN and AK were mentored by TH. PNN was supported jointly by a Trinity College Summer Studentship (F. J. Woods Fund) and a CMS Summer Studentship, AK was supported by a CMS Summer Research in Mathematics bursary, and TH was supported in part by ERC starting grant 804166 (SPRS). We thank Piet Lammers for helpful comments on a draft.

\subsection{Analysis of an example}\label{subsec:example}

\begin{wrapfigure}{l}{0.35\textwidth}
    \vspace{-0.8cm}
    \begin{center}
    \begin{tikzpicture}[scale=0.80]
        \newpairofnodes{a}{0}{0}{1.5}{{(0,0)}}{{(1,0)}}{black}
        \newpairofnodes{b}{2}{0}{1.5}{{(0,1)}}{{(1,1)}}{black}
        \newverticalpath{a}{p}{thick, densely dotted}
        \newverticalpath{b}{p}{thick, densely dotted}
        \newhorizontalpaths{a}{b}{p}{thick, densely dotted}{}
        
        \def\y{1.5}
        \newpairofnodes{c0}{2.5}{0+0.333*\y}{1.5+0.333*\y}{}{}{white}
        \newhorizontalpaths{b}{c0}{}{thick, densely dotted}{}
        \newpairofnodes{d0}{3}{0+0.667*\y}{1.5+0.667*\y}{}{}{white}
        \newpairofnodes{e0}{3.5}{0+\y}{1.5+\y}{{(n-1,0)\text{\ding{51}}}}{{(n-1,1)\text{\ding{55}}}}{black}
        \newpairofnodes{f0}{5.5}{0+\y}{1.5+\y}{{(n,0)\text{\ding{51}}}}{{(n,1)\text{\ding{55}}}}{black}
        \newverticalpath{e0}{p}{thick, densely dotted}
        \newverticalpath{f0}{p}{thick, densely dotted, red}
        \newhorizontalpaths{d0}{e0}{}{thick, densely dotted}{}
        \newonepath{0e0}{0f0}{p}{thick, densely dotted, green}{}
        \newonepath{1e0}{1f0}{p}{thick, densely dotted, black}{}
        \node[rotate=90] at (6.2,0.75+\y) {$\tilde{A}_{n-1}$ case};
        \node at (4.5,0.75+\y) {\scriptsize $q(1-q)$};
        \node at (2.75,0.75+0.5*\y) {$\iddots$};
        
        \def\y{-1.5}
        \newpairofnodes{c1}{2.5}{0+0.333*\y}{1.5+0.333*\y}{}{}{white}
        \newhorizontalpaths{b}{c1}{}{thick, densely dotted}{}
        \newpairofnodes{d1}{3}{0+0.667*\y}{1.5+0.667*\y}{}{}{white}
        \newpairofnodes{e1}{3.5}{0+\y}{1.5+\y}{{(n-1,0)\text{\ding{51}}}}{{(n-1,1)\text{\ding{51}}}}{black}
        \newpairofnodes{f1}{5.5}{0+\y}{1.5+\y}{{(n,0)\text{\ding{51}}}}{{(n,1)\text{\ding{55}}}}{black}
        \newverticalpath{e1}{p}{thick, densely dotted}
        \newverticalpath{f1}{p}{thick, densely dotted, red}
        \newhorizontalpaths{d1}{e1}{}{thick, densely dotted}{}
        \newonepath{0e1}{0f1}{p}{thick, densely dotted, green}{}
        \newonepath{1e1}{1f1}{p}{thick, densely dotted, red}{}
        \node[rotate=90] at (6.2,0.75+\y) {$C_{n-1}$ case};
        \node at (4.5,0.75+\y) {\scriptsize $q^2(1-q)$};
        \node at (2.75,0.75+0.5*\y) {$\ddots$};
        
        \path (2.75,0.75) edge (6.25,0.75);
    \end{tikzpicture}
    \vspace{-0.2cm}
    \captionof{figure}{Computation of $\tilde{A}_n$.}
    \label{fig:line}
    \end{center}
    \vspace{-1.5cm}
\end{wrapfigure}
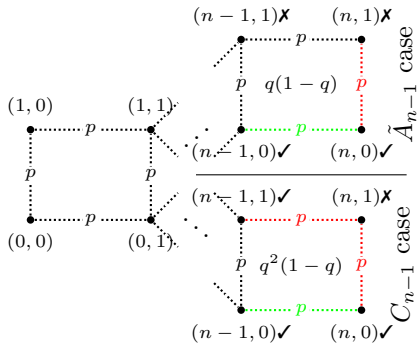

Before proving Theorem~\ref{thm:main}, we first consider the following illustrative example which shows that Theorem~\ref{thm:main} does not follow by naive perturbation arguments. For each $n\geq 0$ let $G_n$ be the path of length $n$ with vertices $\{0,\ldots,n\}$, let $L_n=G_n \times K_2$ be the associated bunkbed graph, let $p=1-q\in (0,1)$, and consider the probabilities
\begin{align*}
    &A_n=A_n(q) = \probm{L_n}{1-q}{(0,0) \leftrightarrow (n,0)},\\
    &B_n=B_n(q) = \probm{L_n}{1-q}{(0,0) \leftrightarrow (n,1)},\\
    &C_n=C_n(q)=\probm{L_n}{1-q}{(0,0) \leftrightarrow (n,0) \leftrightarrow (n,1)}.
\end{align*}
If we also consider the probabilities
\begin{align*}
    &\tilde{A}_n=\tilde{A}_n(q) = \probm{L_n}{1-q}{(0,0) \leftrightarrow (n,0) \not\leftrightarrow (n,1)},\\
    &\tilde{B}_n=\tilde{B}_n(q) = \probm{L_n}{1-q}{(0,0) \leftrightarrow (n,1) \not\leftrightarrow (n,0)}.
\end{align*}
then we easily see by considering the copy of $L_{n-1}$ inside $L_n$ as in Figure \ref{fig:line} that
\begin{multline}
    \tilde{A}_n =  \tilde{A}_{n-1} \cdot \probm{L_{n}}{1-q}{\{(n-1,0) , (n,0)\} \text{ open}, \{(n,0),(n,1)\} \text{ closed}} \\+ C_{n-1} \cdot \probm{L_{n}}{1-q}{\{(n-1,0) , (n,0)\} \text{ open}, \{(n,0),(n,1)\} \text{ and } \{(n-1,1),(n,1)\} \text{ closed}} \\= q(1-q) \tilde A_{n-1}+q^2(1-q)C_{n-1}
\end{multline}
for every $n\geq 1$ and similarly that
\begin{equation}
    \tilde{B}_n  = q(1-q) \tilde B_{n-1} + q^2(1-q)C_{n-1}
\end{equation}
for every $n\geq 1$.
Taking the difference of these two equations, the $C_{n-1}$ terms cancel to yield that
\begin{equation}
    A_n - B_n = \tilde{A}_n - \tilde{B}_n = q(1-q) \cdot \bras{\tilde{A}_{n-1} - \tilde{B}_{n-1}} = q(1-q) \cdot \bras{A_{n-1} - B_{n-1}}
\end{equation}
for every $n\geq 1$, and since $A_0 = 1$ and $B_0 = 1-q$ we conclude by induction that
\begin{equation}\label{eq:gap}
    A_{n} - B_{n} = q^{(n+1)}\brap{1-q}^n
\end{equation}
for every $n\geq 0$. 
Thus, the difference between the two connection probabilities is very small when $n$ is large, uniformly in the choice of $q$. In fact one can say rather more than this.

Indeed, if we define $D_n=D_n(q)=\tilde A_n + \tilde B_n$ to be the probability that $(0,0)$ is connected to exactly one of $(n,0)$ and $(n,1)$ in $L_n$ then similar reasoning to above yields that
\[
\begin{pmatrix}
C_{0}\\D_{0}
\end{pmatrix}
=
\begin{pmatrix}
1-q\\q
\end{pmatrix}
\qquad \text{ and } \qquad
\begin{pmatrix}
C_{n}\\D_{n}
\end{pmatrix}=
\begin{pmatrix}
(1+2q)(1-q)^2 & (1-q)^2  \\
2q^2(1-q) & q(1-q)
\end{pmatrix}
\begin{pmatrix}
C_{n-1}\\D_{n-1}
\end{pmatrix}
\qquad \text{ for each $n\geq 1$}
\]
and solving this linear recursion leads to the expansions
\[
\begin{pmatrix}
C_{n}\\D_{n}
\end{pmatrix} 
= 
\left(\begin{array}{rrrrrrr}
1&-(n+3)q^2&-2(n-1)q^3 &+ \frac{1}{2}(n^2+9n+16)q^4 &+2(n^2+3n-10)q^5 & \cdots &\pm O(q^{n+1})\\
&2q^2 &&-2(n+3)q^4 &-4(n-2)q^5 & \cdots &\pm O(q^{n+1})
\end{array}
\right).
\]
Using the relationship $A_n+B_n=2C_n+D_n$ (which follows by linearity of expectation) together with the equality $A_n-B_n=q^{n+1}(1-q)^n$ above yields the expansions of $A_n$ and $B_n$ both share the same first $n+1$ terms
\begin{equation}
    1 - (n+2)q^2 - 2(n-1)q^3 + \frac{1}{2}(n^2+7n+10)q^4+2(n^2+2n-8)q^5 - \cdots.
\end{equation}
In fact, taking the same computations further one arrives at the simple exact limiting formula
\begin{equation}
    \lim_{n\to\infty} A_n\left(\frac{\lambda}{\sqrt{n}}\right) = \lim_{n\to\infty} B_n\left(\frac{\lambda}{\sqrt{n}}\right) = e^{-\lambda^2} = 1 - \lambda^2 + \frac{\lambda^4}{2} - \cdots
\end{equation}
for each $\lambda>0$. Meanwhile, the difference between $A_n(\lambda/\sqrt{n})$ and $B_n(\lambda/\sqrt{n})$ goes to zero superexponentially fast as $n \to \infty$ for each fixed $\lambda>0$ by \eqref{eq:gap}.
Thus, in this example the bunkbed inequality $A_n(q)\geqslant B_n(q)$ is completely invisible when expanding $A_n$ and $B_n$ to any finite order around $q=0$.

To prove Theorem~\ref{thm:main} we will derive an alternative expansion of connection probabilities in the bunkbed graph that allows us to cancel off most of the contribution to each of the probabilities $\P^\mathrm{bb}_p(u_0 \leftrightarrow v_0)$ and $\P^\mathrm{bb}_p(u_0 \leftrightarrow v_1)$ and be left with a quantity we can show is positive for $p$ sufficiently close to $1$.

\section{Proof of the main result}

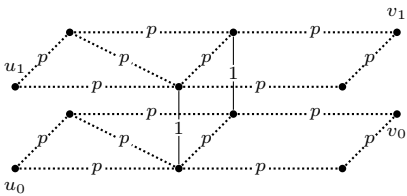
\begin{wrapfigure}{l}{0.35\textwidth}
    \vspace{-0.8cm}
    \begin{center}
    \begin{tikzpicture}[scale=0.725]
        \newpairofnodes{a}{0}{0}{1.5}{u_0}{u_1}{black}
        \newpairofnodes{b}{3}{0}{1.5}{}{}{black}
        \newpairofnodes{c}{6}{0}{1.5}{}{}{black}
        \newpairofnodes{d}{1}{1}{2.5}{}{}{black}
        \newpairofnodes{e}{4}{1}{2.5}{}{}{black}
        \newpairofnodes{f}{7}{1}{2.5}{v_0}{v_1}{black}
        
        \newverticalpath{b}{1}{}
        \newverticalpath{e}{1}{}
        
        \newhorizontalpaths{a}{b}{p}{thick, densely dotted}{}
        \newhorizontalpaths{b}{c}{p}{thick, densely dotted}{}
        \newhorizontalpaths{c}{f}{p}{thick, densely dotted}{}
        \newhorizontalpaths{d}{e}{p}{thick, densely dotted}{}
        \newhorizontalpaths{e}{f}{p}{thick, densely dotted}{}
        \newhorizontalpaths{a}{d}{p}{thick, densely dotted}{}
        \newhorizontalpaths{b}{d}{p}{thick, densely dotted}{}
        \newhorizontalpaths{b}{e}{p}{thick, densely dotted}{}
    \end{tikzpicture}
    \vspace{-0.2cm}
    \captionof{figure}{The bunkbed model after conditioning on vertical edges.}
    \label{fig:modelconditioning}
    \end{center}
    \vspace{-1.0cm}
\end{wrapfigure}

We now prove Theorem~\ref{thm:main}. In fact we will prove a stronger version of the theorem which allows us to condition on which vertical edges are open. We write $T$ for the set of $w\in V$ such that the vertical edge connecting $w_0$ and $w_1$ is open. One can deduce Theorem~\ref{thm:main} from the following theorem by taking expectations over $T$.

\begin{theorem}\label{thm:mainprime}
    Let $G=(V,E)$ be a finite graph. Then $\probm{\mathrm{bb}}{p}{u_0 \leftrightarrow v_0 \mid T=t} \geqslant \probm{\mathrm{bb}}{p}{u_0 \leftrightarrow v_1 \mid T=t}$ for every $u,v\in V$, $t \subseteq V$, and $1-2^{-\abs{E}/2-2}\leqslant p<1$, with strict inequality if and only if $u$ and $v$ are connected in $V \setminus t$.
\end{theorem}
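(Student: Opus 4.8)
The plan is to expand each connection probability around $p=1$ in powers of $q=1-p$ by summing over the set of \emph{closed} edges rather than open edges, since when $p$ is close to $1$ most edges are open. Fix $t\subseteq V$ and condition on $T=t$: the vertical edges above $t$ are present and the rest absent. For a subset $S$ of the horizontal edges, let $\omega_S$ be the configuration in which exactly the edges of $S$ are closed (all other horizontal edges open, vertical edges as dictated by $t$); then
\[
\probm{\mathrm{bb}}{p}{u_0\leftrightarrow v_i\mid T=t}=\sum_{S}\indi{u_0\overset{\omega_S}{\leftrightarrow}v_i}\,p^{\#\text{open horiz.}}q^{\abs S},
\]
and similarly for $v_0$. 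Taking the difference, the common factor $p^{\#\text{open horiz.}}$ does not factor out cleanly because different $S$ have different numbers of open edges, so instead I would group terms by $\abs S=k$ and write the difference as $\sum_k q^k p^{\abs E - k}\,N_k(t)$, where
\[
N_k(t)=\#\setcond{S\subseteq E_{\mathrm{horiz}}}{\abs S=k,\ u_0\overset{\omega_S}{\leftrightarrow}v_0,\ u_0\overset{\omega_S}{\not\leftrightarrow}v_1}-\#\setcond{S}{\abs S=k,\ u_0\overset{\omega_S}{\leftrightarrow}v_1,\ u_0\overset{\omega_S}{\not\leftrightarrow}v_0}.
\]
The key structural claim I would aim to prove is a \emph{reflection/pairing} statement: the two collections of configurations being counted can be put into a partial injection by the natural ``flip everything above a cutset'' map (reflect the part of the configuration on one side of a minimal vertex-cut separating the two copies at $v$), which shows $N_k(t)\geq 0$ for every $k$ up to some threshold, and moreover that the \emph{first} nonvanishing $N_k(t)$ is strictly positive exactly when $u\leftrightarrow v$ in $V\setminus t$. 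Concretely, when $u$ and $v$ are not connected in $V\setminus t$ every horizontal path from $u_0$ to $v_i$ must use a vertical edge, the minimal number of closed edges needed to separate $u_0$ from $v_0$ equals that needed to separate $u_0$ from $v_1$ by a symmetry argument, and the leading terms match; when they are connected in $V\setminus t$ there is a purely horizontal path to $v_0$ but not to $v_1$, producing a strictly positive contribution at order $q^0$ (or at the first order where the horizontal structure distinguishes them).

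Once the sign of each coefficient up to order roughly $\abs E/2$ is controlled, I would bound the tail: the terms with $k$ large are crude to estimate because $N_k(t)$ can be negative there, but $\abs{N_k(t)}\leq \binom{\abs E}{k}$ trivially, and $p^{\abs E-k}\geq (1/2)^{\abs E}$ is irrelevant — better to keep $p^{\abs E - k}q^k$ and note $\sum_k q^k p^{\abs E-k}\binom{\abs E}{k}$-type sums are dominated by the leading term once $q\leq 2^{-\abs E/2-2}$. The threshold $1-2^{-\abs E/2-2}$ in the statement strongly suggests the argument is: the leading positive coefficient is at least $1$ in absolute value and sits at order $q^{j}$ for some $j\leq \abs E/2+1$, while the entire remaining sum is bounded by something like $\sum_{k>j}\binom{\abs E}{k}q^k\leq 2^{\abs E}q^{j+1}/(1-q)$, and $q\leq 2^{-\abs E/2-2}$ makes this strictly smaller than $q^j$. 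So the final step is an elementary geometric-series tail bound, with the precise constant chosen to make it work.

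The main obstacle is the combinatorial heart: proving $N_k(t)\geq 0$ for all relevant $k$, i.e.\ establishing the injection from ``connected to $v_1$ but not $v_0$'' configurations into ``connected to $v_0$ but not $v_1$'' configurations that preserves the number of closed edges. The natural candidate is to pick, deterministically from $\omega_S$, a minimal edge-cut (or vertex-cut) isolating the cluster of $u_0$ from $v$-copies and reflect across it — this is exactly the kind of map used in H\"aggstr\"om's proof for the Ising model and in the combinatorial reformulations of the bunkbed conjecture — but making it well-defined and injective requires care: one must choose the cut canonically (e.g.\ the cut ``closest to $v$'' in some order), verify that reflecting does not change $\abs S$, and verify that a configuration connected to $v_1$ and not $v_0$ always \emph{has} such a cut whose reflection lands in the target set and not back in the source set. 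I expect this to be the step where the hypothesis $p$ close to $1$ is genuinely needed only through the tail bound, so that the injection itself should hold combinatorially for all $p$; the subtlety is that it may only be a \emph{partial} injection, failing for large $k$, which is precisely why one cannot conclude the full bunkbed conjecture and must restrict to $p\geq 1-2^{-\abs E/2-2}$. Finally, the strictness claim requires checking that when $u\leftrightarrow v$ in $V\setminus t$ the injection is not surjective at the leading order — the all-horizontal path to $v_0$ gives a source-less target configuration — while when $u\not\leftrightarrow v$ in $V\setminus t$ the injection is a bijection at every order up to the threshold, forcing equality.
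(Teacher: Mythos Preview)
Your proposal has a genuine gap: the ``combinatorial heart'' you identify --- an injection from $k$-closed-edge configurations with $u_0\leftrightarrow v_1\not\leftrightarrow v_0$ into those with $u_0\leftrightarrow v_0\not\leftrightarrow v_1$ --- is never constructed, only hoped for. You yourself flag it as ``the main obstacle,'' and the sketch you give (choose a canonical cut and reflect across it) is precisely the kind of argument people have tried and failed to make work for the full bunkbed conjecture. There is also an internal tension in your plan: if the injection exists for every $k$, you have proved the bunkbed conjecture outright; you say it ``may only be a partial injection, failing for large $k$,'' but you give no mechanism for why a purely combinatorial, $p$-free map would distinguish small $k$ from large $k$. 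Without that, there is no proof. A smaller issue: your heuristic that the leading positive coefficient sits at order $q^j$ with $j\le |E|/2+1$ is not justified, and the paper's line-graph example (Section~\ref{subsec:example}) shows the difference can be of order $q^{|E|+1}$, so your tail-bound arithmetic would need to be redone even if the sign claim held.

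The paper's proof shares your overall shape (a positive main term beats a crudely bounded remainder) and even uses a reflection argument, but the decomposition is different and this is what makes the argument go through. Rather than summing over closed-edge sets, the paper sums over \emph{tripartitions} $s=(s_0,s_1,s_2)$ of $E$ according to whether zero, one, or two of the corresponding horizontal copies are open. Conditioning on $S=s$ reduces to Linusson's \emph{alternative bunkbed model} on the contracted graph $G(s)$, where each edge is deterministically ``up'' or ``down''. In that model the reflection (``mirroring'') is a genuine measure-preserving involution, and it shows $F(s)=0$ whenever there is no vertical-free $\pi_s(u)$--$\pi_s(v)$ path (Lemma~\ref{lem:mirroring}) --- this is where your reflection idea lands cleanly, because one is flipping up/down labels rather than open/closed states. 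When the vertical-free distance in $G(s)$ is $0$ or $1$, a short direct argument gives $F(s)\ge 2^{1-|s_1|}$ (Lemma~\ref{lem:Splus}); the remaining tripartitions (distance $\ge 2$) are then shown to be rare relative to the distance-$\le 1$ ones when $p\ge 1-2^{-|E|/2-2}$ (Lemma~\ref{lem:Sminus}), and the trivial bound $F(s)\ge -2$ suffices there. The tripartition is the missing idea: it replaces your unproven $N_k\ge 0$ by a decomposition in which the ``zero'' and ``positive'' pieces are handled unconditionally and only a genuinely small remainder needs the hypothesis on $p$.
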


Here, the condition that $u$ and $v$ are connected in $V \setminus t$ includes the condition that $u,v \notin t$.

The proof of Theorem~\ref{thm:mainprime} will rely on a relationship between percolation on the bunkbed graph and another model due to Linusson \cite{MR2745680}, which we call the \textbf{alternative bunkbed model}. 

\begin{wrapfigure}{l}{0.35\textwidth}
    \vspace{-0.4cm}
    \begin{center}
    \begin{tikzpicture}[scale=0.725]
        \def\y{3.5}
        
        \newpairofnodes{a}{0}{0+\y}{1.5+\y}{u_0}{u_1}{black}
        \newpairofnodes{b}{3}{0+\y}{1.5+\y}{}{}{black}
        \newpairofnodes{c}{6}{0+\y}{1.5+\y}{}{}{black}
        \newpairofnodes{d}{1}{1+\y}{2.5+\y}{}{}{black}
        \newpairofnodes{e}{4}{1+\y}{2.5+\y}{}{}{black}
        \newpairofnodes{f}{7}{1+\y}{2.5+\y}{v_0}{v_1}{black}
        
        \newverticalpath{b}{1}{}
        \newverticalpath{e}{1}{}
        
        \newhorizontalpaths{a}{b}{p}{thick, densely dotted, darkyellow}{}
        \newhorizontalpaths{b}{c}{p}{thick, densely dotted, darkyellow}{}
        \newhorizontalpaths{c}{f}{p}{thick, densely dotted, green}{}
        \newhorizontalpaths{c}{f}{}{dashed, bend left, ->}{}
        \newhorizontalpaths{d}{e}{p}{thick, densely dotted, darkyellow}{}
        \newhorizontalpaths{e}{f}{p}{thick, densely dotted, darkyellow}{}
        \newhorizontalpaths{a}{d}{p}{thick, densely dotted, green}{}
        \newhorizontalpaths{d}{a}{}{dashed, bend right, ->}{}
        \newhorizontalpaths{b}{d}{p}{thick, densely dotted, darkyellow}{}
        \newhorizontalpaths{b}{e}{p}{thick, densely dotted, red}{}
        \node at (3.25,0.25+\y) {\scriptsize $\times$};
        \node at (3.75,0.75+\y) {\scriptsize $\times$};
        \node at (3.25,1.75+\y) {\scriptsize $\times$};
        \node at (3.75,2.25+\y) {\scriptsize $\times$};
        
        \newpairofnodes{a1}{0}{0}{1.5}{u_0}{u_1}{black}
        \newpairofnodes{b1}{3}{0}{1.5}{}{}{black}
        \newpairofnodes{e1}{4}{1}{2.5}{}{}{black}
        \newpairofnodes{f1}{7}{1}{2.5}{v_0}{v_1}{black}
        
        \newverticalpath{b1}{1}{}
        \newverticalpath{e1}{1}{}
        
        \newhorizontalpaths{a1}{b1}{1/2}{thick, densely dotted}{}
        \newhorizontalpaths{a1}{b1}{1/2}{thick, densely dotted, bend right}{}
        \newhorizontalpaths{a1}{e1}{1/2}{thick, densely dotted}{}
        \newhorizontalpaths{b1}{f1}{1/2}{thick, densely dotted}{}
        \newhorizontalpaths{e1}{f1}{1/2}{thick, densely dotted}{}
    \end{tikzpicture}
    \vspace{-0.2cm}
    \captionof{figure}{A tripartition $s = \brap{\color{red}s_0,\color{darkyellow}s_1,\color{green}s_2\color{black}}$ (top) and its induced alternative bunkbed model (bottom).}
    \label{fig:modelconditioning}
    \end{center}
    \vspace{-1.0cm}
\end{wrapfigure}

Let $G=(V,E)$ be a finite graph, let $t \subseteq V$ be a set of vertices, and let $p\in (0,1)$. If we condition on the event $T=t$, the horizontal edges of the bunkbed graph are still i.i.d., each with probability $p$ of being open.

 We call a partition of $E$ into three disjoint (possibly empty) sets a \textbf{tripartition} of $E$, and write $\mathcal{S}$ for the set of tripartitions of $E$.
Let $S=(S_0,S_1,S_2)$ be the random tripartition of the edge set of $G$ into those edges that have zero, one, or two corresponding open horizontal edges in the bunkbed graph. We can sample from the conditional distribution of percolation on the bunkbed graph given $T=t$ and $S=s=(s_0,s_1,s_2)$ by choosing for each edge $e\in s_1$ exactly one of the two corresponding horizontal bunkbed edges to be open, each with probability $1/2$, independently at random for each $e\in s_1$; if $e \in s_0$ then both corresponding horizontal bunkbed edges are closed while if $e \in s_2$ then both corresponding bunkbed edges are open. 

Given a tripartition $s=(s_0,s_1,s_2)$ of the edge set of $G$, we write $G(s)=(V(s),E(s))$ for the graph formed by deleting the edges of $s_0$ and contracting every edge of $s_2$, and write $\pi_s:V\to V(s)$ for the resulting projection map (where each vertex of $v$ is identified with its connected component in $s_2$). Note that $G(s)$ may contain self-loops and multiple edges even if the original graph $G$ did not.

These considerations lead to the definition of the alternative bunkbed model, which we now introduce. Let $G=(V,E)$ be a finite graph and let $t\subseteq V$ be a distinguished set of vertices. Let $\mathbb{Q}_G^t$ be the law of the random subgraph of the bunkbed graph $G \times K_2$ in which we include a vertical edge $\{u_0,u_1\}$ if and only if $u\in t$, and for each horizontal edge of $G$ choose exactly one of the two corresponding horizontal edges of $G\times K_2$ to be open, independently at random with probability $1/2$ each. Given a random variable with law $\mathbb{Q}_G^t$, we say that an edge $e$ of $G$ is \textbf{up} if the upper of its two corresponding horizontal edges is open, and \textbf{down} otherwise.
    
The description of the conditional distribution of the bunkbed model given $T=t$ and $S=s$ discussed above leads to the identity
\begin{multline}\label{eq:Linusson_expansion}
    \probm{\mathrm{bb}}{p}{u_0 \leftrightarrow v_0 \mid T=t} - \probm{\mathrm{bb}}{p}{u_0 \leftrightarrow v_1 \mid T=t} \\= \sum_{s \in \mathcal{S}}  
    \left[\mathbb{Q}_{G(s)}^{\pi_s(t)}\left(\pi_s(u)_0 \leftrightarrow \pi_s(v)_0\right) - \mathbb{Q}_{G(s)}^{\pi_s(t)}\left(\pi_s(u)_0 \leftrightarrow\pi_s(v)_1\right)\right]\probm{\mathrm{bb}}{p}{S=s}.
\end{multline}
Note also that we have the equality
\begin{equation}
    \probm{\mathrm{bb}}{p}{S=s}  = (1-p)^{2\abs{s_0}} (2p(1-p))^{\abs{s_1}} p^{2\abs{s_2}} 
\end{equation}
for each tripartition $s\in \mathcal{S}$. Linusson has conjectured \cite[Conjecture 2.5]{MR2745680} that 
\begin{equation}
    \mathbb{Q}_{G}^{t}(u_0 \leftrightarrow v_0) \geqslant \mathbb{Q}_{G}^{t}(u_0 \leftrightarrow v_1)
\end{equation}
for every finite graph $G=(V,E)$, $t\subseteq V$, and $u,v \in V$; it follows from \eqref{eq:Linusson_expansion} that this conjecture implies Conjecture~\ref{conj:bunkbed}. We will instead use \eqref{eq:Linusson_expansion} to probe the validity of the bunkbed conjecture when $p$ is close to $1$, the main idea being that $G(s)$ is typically very small in this regime since most edges will be contracted.

Fix a finite graph $G=(V,E)$, a set $t\subseteq V$, two vertices $u,v\in V$ and a tripartition $s\in \mathcal{S}$. We say that a path in $G(s)$ from $\pi_s(u)$ to $\pi_s(v)$ is \emph{vertical-free} if all vertices on that path (including the endpoints) are not in $\pi_s(t)$. Let $d(s)$ be the length of the shortest such path if it exists, and $\infty$ otherwise.
We also define
\[
F(s) \coloneqq  \mathbb{Q}_{G(s)}^{\pi_s(t)}\left(\pi_s(u)_0 \leftrightarrow \pi_s(v)_0\right) - \mathbb{Q}_{G(s)}^{\pi_s(t)}\left(\pi_s(u)_0 \leftrightarrow\pi_s(v)_1\right).
\]
The idea is to partition $\mathcal{S}$ into several classes according to the value of $d(s)$ as follows:
\colorlet{lightyellow}{yellow!35!}
\colorlet{lightgreen}{green!20!}
\colorlet{lightred}{red!20!}
\hfsetbordercolor{white}
\[\hfsetfillcolor{lightyellow}\tikzmarkin{d1}(0.1,-0.2)(-0.1,0.4) \mathcal{S}_{\infty} = \brac{s \in \mathcal{S} \colon d(s) = \infty} \tikzmarkend{d1} \qquad
\hfsetfillcolor{lightgreen}\tikzmarkin{d2}(0.1,-0.2)(-0.1,0.4) \mathcal{S}_{\leqslant 1} = \brac{s \in \mathcal{S} \colon d(s) \leqslant 1} \tikzmarkend{d2} \qquad
\hfsetfillcolor{lightred}\tikzmarkin{d3}(0.1,-0.2)(-0.1,0.4) \mathcal{S}_{\geqslant 2} = \brac{s \in \mathcal{S} \colon d(s) \geqslant 2} \tikzmarkend{d3}.\]
% and prove lower bounds on $F$ in each of these sets.
 We will show that $F$ vanishes on $\mathcal{S}_{\infty}$ and that it is uniformly positive on $\mathcal{S}_{\leqslant 1}$. The contribution of the final set $\mathcal{S}_{\geqslant 2}$ will be controlled using the trivial inequality $F(s) \geqslant -2$.

We recall the following argument of Linusson \cite[Lemma 2.3]{MR2745680}, which we include a proof of for completeness.

\begin{wrapfigure}{l}{0.45\textwidth}
    \vspace{-0.4cm}
    \begin{center}
    \begin{tikzpicture}[scale=0.7]
        \newpairofnodes{a}{0}{0}{1.5}{u_0}{u_1}{black}
        \newpairofnodes{b}{3}{0}{1.5}{}{}{black}
        \newpairofnodes{e}{4}{1}{2.5}{}{}{black}
        \newverticalpath{b}{1}{}
        \newverticalpath{e}{1}{}
        \newhorizontalpathsone{a}{b}{1/2}{blue}{0}{1}
        \newhorizontalpathsone{a}{b}{1/2}{blue, bend right}{0}{1}
        \newhorizontalpathsone{a}{e}{1/2}{blue}{0}{1}
        \draw[thick, dotted, fill=green, fill opacity=0.3] (2.8, -0.4) -- (4.2, 0.9) -- (4.2, 2.9) -- (2.8, 1.6) -- cycle;
        
        \def\x{2.5} \def\y{-1.4}
        \newpairofnodes{b1}{3+\x}{0+\y}{1.5+\y}{}{}{black}
        \newpairofnodes{e1}{4+\x}{1+\y}{2.5+\y}{}{}{black}
        \newpairofnodes{f1}{7+\x}{1+\y}{2.5+\y}{v_0}{v_1}{black}
        \newverticalpath{b1}{1}{}
        \newverticalpath{e1}{1}{}
        \newhorizontalpathsone{e1}{f1}{1/2}{red}{0}{1}
        \newhorizontalpathsone{b1}{f1}{1/2}{red}{0}{1}
        \path [->, green, ultra thick, bend right] (3.5,0.45) edge node[below, sloped] {\small $f_{uv} = 1$} (3.5+\x,0.45+\y);
        \draw[thick, dotted, fill=green, fill opacity=0.3] (2.8+\x, -0.4+\y) -- (4.2+\x, 0.9+\y) -- (4.2+\x, 2.9+\y) -- (2.8+\x, 1.6+\y) -- cycle;
        
        \def\x{2.5} \def\y{1.4}
        \newpairofnodes{b1}{3+\x}{0+\y}{1.5+\y}{}{}{black}
        \newpairofnodes{e1}{4+\x}{1+\y}{2.5+\y}{}{}{black}
        \newpairofnodes{f1}{7+\x}{1+\y}{2.5+\y}{v_0}{v_1}{black}
        \newverticalpath{b1}{1}{}
        \newverticalpath{e1}{1}{}
        \newhorizontalpathsone{e1}{f1}{1/2}{red}{1}{0}
        \newhorizontalpathsone{b1}{f1}{1/2}{red}{1}{0}
        \path [->, green, ultra thick, bend left] (3.5,2.05) edge node[above, sloped] {\small $f_{uv} = -1$} (3.5+\x,2.05+\y);
        \draw[thick, dotted, fill=green, fill opacity=0.3] (2.8+\x, -0.4+\y) -- (4.2+\x, 0.9+\y) -- (4.2+\x, 2.9+\y) -- (2.8+\x, 1.6+\y) -- cycle;
    \end{tikzpicture}
    \vspace{-0.2cm}
    \captionof{figure}{The mirroring argument.}
    \label{fig:mirroring}
    \end{center}
    \vspace{-1.5cm}
\end{wrapfigure}
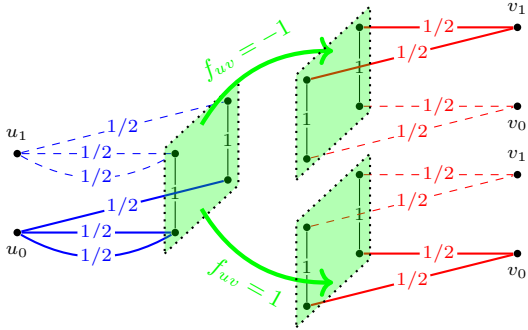

\begin{lemma}[Mirroring argument]\label{lem:mirroring}
    If $s \in \mathcal{S}_{\infty}$ then $F(s) = 0$.
\end{lemma}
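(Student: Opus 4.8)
The plan is to produce, for each $s \in \mathcal{S}_\infty$, a measure-preserving involution $\Phi$ of the sample space of $\mathbb{Q}_{G(s)}^{\pi_s(t)}$ that exchanges the events $\{\pi_s(u)_0 \leftrightarrow \pi_s(v)_0\}$ and $\{\pi_s(u)_0 \leftrightarrow \pi_s(v)_1\}$; the identity $F(s)=0$ then follows immediately. First dispose of the degenerate case $\pi_s(v)\in\pi_s(t)$: there the vertical edge at $\pi_s(v)$ is present in every configuration sampled from $\mathbb{Q}_{G(s)}^{\pi_s(t)}$, so $\pi_s(v)_0\leftrightarrow\pi_s(v)_1$ always, the two events coincide, and $F(s)=0$. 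Henceforth assume $\pi_s(v)\notin\pi_s(t)$ and let $H$ be the connected component of $\pi_s(v)$ in the graph obtained from $G(s)$ by deleting every vertex lying in $\pi_s(t)$. Since $H$ is connected and avoids $\pi_s(t)$, any path inside $H$ is vertical-free, so the hypothesis $d(s)=\infty$ forces $\pi_s(u)\notin H$.

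Define $\Phi$ to be the map that reverses the up/down status of every horizontal edge of $G(s)$ with at least one endpoint in $H$ (self-loops and parallel edges included) and fixes every other edge. Since $H$ depends only on $s$, this reverses a fixed subset of the independent fair coins defining $\mathbb{Q}_{G(s)}^{\pi_s(t)}$, so $\Phi$ is a measure-preserving involution. Let $\psi$ be the involution of the bunkbed vertex set exchanging $x_0$ and $x_1$ for every $x\in H$ and fixing $x_0,x_1$ for every $x\notin H$. The heart of the argument is the claim that for all bunkbed vertices $a,b$ one has $a\leftrightarrow b$ in $\omega$ if and only if $\psi(a)\leftrightarrow\psi(b)$ in $\Phi(\omega)$; since $\Phi$ and $\psi$ are both involutions it suffices to prove the forward implication, and for that it is enough to check, for each edge open in $\omega$, that $\psi$ sends its two endpoints to a pair connected in $\Phi(\omega)$. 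A horizontal edge with both endpoints in $H$ has its orientation flipped by $\Phi$, which exactly matches the level-swap performed by $\psi$; a horizontal edge with neither endpoint in $H$ is untouched by $\Phi$ and has both endpoints fixed by $\psi$; and a vertical edge, which sits at a vertex of $\pi_s(t)$ and hence outside $H$, is likewise fixed by both.

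The only delicate case — and the step I expect to be the main obstacle — is a horizontal edge $e=\{x,y\}$ with $x\in H$ and $y\notin H$, since here $\psi$ is not literally a graph isomorphism near the boundary of $H$. One first observes that $y$ must belong to $\pi_s(t)$: a neighbour of $H$ lying outside $\pi_s(t)$ would itself be in $H$. Now $\Phi$ flips $e$, so if $\{x_i,y_i\}$ is the open copy of $e$ in $\omega$ then $\{x_{1-i},y_{1-i}\}$ is open in $\Phi(\omega)$; and since $y\in\pi_s(t)$ the vertical edge $\{y_0,y_1\}$ is open in $\Phi(\omega)$, so $x_{1-i}=\psi(x_i)$ is connected to $y_i=\psi(y_i)$ in $\Phi(\omega)$, as required. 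This is precisely the point where one must exploit that the boundary of $H$ consists of vertices carrying (open) vertical edges. With the claim established, applying it to $a=\pi_s(u)_0$ and $b=\pi_s(v)_0$, and using $\psi(\pi_s(u)_0)=\pi_s(u)_0$ (because $\pi_s(u)\notin H$) together with $\psi(\pi_s(v)_0)=\pi_s(v)_1$ (because $\pi_s(v)\in H$), shows that $\Phi$ carries $\{\pi_s(u)_0\leftrightarrow\pi_s(v)_0\}$ bijectively onto $\{\pi_s(u)_0\leftrightarrow\pi_s(v)_1\}$. As $\Phi$ preserves $\mathbb{Q}_{G(s)}^{\pi_s(t)}$, these events have equal probability, so $F(s)=0$.
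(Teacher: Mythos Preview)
Your proof is correct and follows essentially the same mirroring argument as the paper. The only cosmetic differences are that the paper builds the ``flip set'' from the side of $u$ (defining $U$ as the vertical-free component of $\pi_s(u)$ and flipping $E_v = E(s)\setminus E_u$) whereas you build it from the side of $v$ (flipping edges touching the vertical-free component $H$ of $\pi_s(v)$), and that the paper phrases the conclusion via the antisymmetry of a random variable $f_{uv}$ rather than an explicit event bijection; both variants encode the same involution idea, and your edge-by-edge verification of the boundary case is a more explicit version of what the paper leaves implicit.
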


\begin{proof}
    Fix $s\in \mathcal{S}_{\infty}$. To lighten notation we write $\pi=\pi_s$. The claim is trivial if $\pi(u)$ or $\pi(v)$ belongs to $\pi(t)$, so suppose not. Note that $\pi(u) \neq \pi(v)$ as otherwise $d(s) = 0$. 
    
    Let $U$ be the set of vertices in $G(s)$ that have a vertical-free path to $u$, $E_u$ be the set of edges of $G(s)$ that have at least one endpoint in $U$ and $E_v=E(s)\setminus E_u$. The partition $\brac{E_u, E_v}$ has the properties that any vertex that is an endpoint both of an edge in $E_u$ and an edge in $E_v$ must belong to $\pi(t)$, that every edge incident to $\pi(u)$ belongs to $E_u$, and that every edge incident to $\pi(v)$ belongs to $E_v$.
    
    Let $\omega$ be a sample from $\mathbb{Q}^{\pi(t)}_{G(s)}$, and let $f_{uv}$ be the random variable
    \[f_{uv}(\omega) = \indi{\pi(u)_0 \leftrightarrow \pi(v)_0} + \indi{\pi(u)_1 \leftrightarrow \pi(v)_1} - \indi{\pi(u)_0 \leftrightarrow \pi(v)_1} - \indi{\pi(u)_1 \leftrightarrow \pi(v)_0},\]
    where connections are defined in $\omega$. If we define $\omega'$ by flipping the value of each edge in $E_v$ (i.e., replacing each up edge with a down edge and vice versa) then we see that $\omega'$ has the same distribution as $\omega$ and that $f_{uv}(\omega')=-f_{uv}(\omega)$ as depicted in Figure \ref{fig:mirroring}: If there is a path from $u_0$ to $v_0$ in $\omega$ then there is a path from $u_0$ to $v_1$ in $\omega'$ and vice versa. It follows that $f_{uv}(\omega)$ has expectation zero, which is equivalent to the claim by symmetry.
\end{proof}

Next we establish a uniform positive lower bound on $\mathcal{S}_{\leqslant 1}$.

\begin{lemma}\label{lem:Splus}
    If $s\in \mathcal{S}_{\leqslant 1}$ then $F(s) \geqslant 2^{1-\abs{s_1}} \geqslant 2^{1-\abs{E}}$.
\end{lemma}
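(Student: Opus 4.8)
The plan is to fix $s\in\mathcal S_{\leqslant 1}$, abbreviate $x=\pi_s(u)$ and $y=\pi_s(v)$, and analyse $F(s)$ directly in terms of the $2^{\abs{s_1}}$ equally likely up/down labellings of the edges of $G(s)$ that determine a sample $\omega$ from $\mathbb Q^{\pi_s(t)}_{G(s)}$ (recall that $G(s)$ has exactly $\abs{s_1}$ edges). As in the proof of Lemma~\ref{lem:mirroring} I would work with the symmetric functional $f_{uv}(\omega)=\indi{x_0\leftrightarrow y_0}+\indi{x_1\leftrightarrow y_1}-\indi{x_0\leftrightarrow y_1}-\indi{x_1\leftrightarrow y_0}$: flipping every edge preserves the law $\mathbb Q^{\pi_s(t)}_{G(s)}$ and negates the two cross terms, so $F(s)=\tfrac12\,\mathbb E[f_{uv}]=2^{-\abs{s_1}-1}\sum_\omega f_{uv}(\omega)$, the sum ranging over all labellings. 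It therefore suffices to show that $f_{uv}(\omega)\geqslant 0$ for every labelling and that $\sum_\omega f_{uv}(\omega)\geqslant 4$, and the intention is to obtain the second inequality from a small number of explicit ``good'' labellings on which $f_{uv}$ equals $1$ or $2$.

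First I would dispose of the case $d(s)=0$, i.e.\ $x=y\notin\pi_s(t)$, where $f_{uv}=2\,\indi{x_0\nleftrightarrow x_1}\geqslant 0$ identically. In the all-down labelling the vertex $x_1$ is incident to no open edge — there is no open upper horizontal edge, and there is no vertical edge at $x$ since $x\notin\pi_s(t)$ — so $f_{uv}=2$ there; symmetrically $f_{uv}=2$ in the all-up labelling. These two labellings are distinct as soon as $\abs{s_1}\geqslant 1$, giving $\sum_\omega f_{uv}\geqslant 4$ and hence $F(s)\geqslant 2^{1-\abs{s_1}}$; the remaining possibility $\abs{s_1}=0$ forces $F(s)=1$, which already beats $2^{1-\abs E}$ whenever $G$ has an edge (the edgeless case being vacuous).

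The case $d(s)=1$ is the substantive one: there is a vertical-free edge $e_0=\{x,y\}$ of $G(s)$ with $x\neq y$ and $x,y\notin\pi_s(t)$. For the pointwise bound I would note that, by the flip symmetry, it is enough to consider labellings in which $e_0$ is down, so that $x_0\leftrightarrow y_0$; then $\indi{x_0\leftrightarrow y_1}=\indi{y_0\leftrightarrow y_1}$ and $\indi{x_1\leftrightarrow y_0}=\indi{x_0\leftrightarrow x_1}$, and if both of these equal $1$ then routing through $e_0$ forces $x_1\leftrightarrow y_1$ as well, so $f_{uv}=1+\indi{x_1\leftrightarrow y_1}-\indi{y_0\leftrightarrow y_1}-\indi{x_0\leftrightarrow x_1}\geqslant 0$, with $f_{uv}\geqslant 1$ whenever $x_0\nleftrightarrow x_1$ and $y_0\nleftrightarrow y_1$. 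I would then take as good labellings the all-down and all-up ones, together with the labelling in which $e_0$ is up and every other edge down, and its mirror in which $e_0$ is down and every other edge up. In each of these, $\{x_1,y_1\}$ (respectively $\{x_0,y_0\}$) is a whole connected component — for the two mixed labellings because $e_0$ is the unique up-edge (respectively down-edge) and neither endpoint lies in $\pi_s(t)$ — so $x_0\nleftrightarrow x_1$ and $y_0\nleftrightarrow y_1$ and thus $f_{uv}\geqslant 1$. When $\abs{s_1}\geqslant 2$ these four labellings are distinct, so $\sum_\omega f_{uv}\geqslant 4$ and $F(s)\geqslant 2^{1-\abs{s_1}}$; the boundary case $\abs{s_1}=1$, where the list collapses to the two labellings ``$e_0$ up'' and ``$e_0$ down'', I would settle by direct computation. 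Finally $2^{1-\abs{s_1}}\geqslant 2^{1-\abs E}$ is immediate from $\abs{s_1}\leqslant\abs E$.

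I expect the main obstacle to be the pointwise inequality $f_{uv}\geqslant 0$ in the case $d(s)=1$, together with correctly identifying the event $\{x_0\nleftrightarrow x_1\}\cap\{y_0\nleftrightarrow y_1\}$ on which $f_{uv}\geqslant 1$: one must exclude the ``twisted'' labellings — for instance those with $x_1\leftrightarrow y_0$ but $x_0\nleftrightarrow y_0$, which were precisely the configurations that drove $f_{uv}$ negative in the $\mathcal S_\infty$ setting — and the only structural feature ruling them out here is the length-one vertical-free edge $e_0$. The rest should be routine bookkeeping: checking that the exhibited labellings are genuinely good (which crucially uses $x,y\notin\pi_s(t)$ to see that $\{x_1,y_1\}$ is an isolated component) and dealing with the coincidences that occur for small $\abs{s_1}$.
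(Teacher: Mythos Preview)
Your plan is the paper's: show $f_{uv}\geqslant 0$ pointwise on $\mathcal S_{\leqslant 1}$ and then exhibit explicit labellings on which $f_{uv}\geqslant 1$. Your pointwise argument (reduce via the global flip to the case $e_0$ down, then rewrite the two cross-indicators as $\indi{x_0\leftrightarrow x_1}$ and $\indi{y_0\leftrightarrow y_1}$) is a repackaging of the paper's observation that the two same-level indicators always sum to at least $1$ and that if both cross-indicators equal $1$ then all four of $x_0,x_1,y_0,y_1$ lie in one cluster.

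Where you differ, you are actually more careful than the paper in the case $d(s)=1$. The paper invokes only the all-up and all-down labellings, on each of which $f_{uv}=1$; since $F(s)=\tfrac12\,\mathbb E[f_{uv}]$, this literally yields only $F(s)\geqslant 2^{-\abs{s_1}}$, a factor of two short of the stated bound. Your two additional labellings ($e_0$ up with every other edge down, and its mirror) each give $f_{uv}\geqslant 1$ as well, and the four together recover the claimed $2^{1-\abs{s_1}}$ once $\abs{s_1}\geqslant 2$. One caution, though: the ``direct computation'' you defer to at $\abs{s_1}=1$, $d(s)=1$ will produce $F(s)=\tfrac12<1=2^{1-\abs{s_1}}$, so the first inequality of the lemma genuinely fails in that boundary case (a defect already present in the paper's statement). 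This is harmless for the application to Theorem~\ref{thm:mainprime}, which only uses the weaker conclusion $F(s)\geqslant 2^{1-\abs E}$, but you should not expect the direct computation to close the gap as written.
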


\begin{proof}
    Fix $s\in \mathcal{S}_{\leqslant 1}$ and write $\pi=\pi_s$. Let $\omega$ and $f_{uv}$ be as above. First observe that $f_{uv}(\omega) \geqslant 0$ deterministically.
    Indeed, since either $u_0 \leftrightarrow v_0$ or $u_1 \leftrightarrow v_1$, we have that the sum of the first two indicators is at least $1$. Thus, for $f_{uv}$ to be negative we would require both of the latter two indicators to be $1$, but in this case at least one of the two events $\{\pi(u)_0 \leftrightarrow \pi(v)_1 \leftrightarrow \pi(v)_0 \leftrightarrow \pi(u)_1\}$ or $\{\pi(v)_0 \leftrightarrow \pi(u)_1 \leftrightarrow \pi(u)_0 \leftrightarrow \pi(v)_1\}$ must hold, so that the vertices $\{\pi(u)_0,\pi(u)_1,\pi(v)_0,\pi(v)_1\}$ all belong to the same connected component of $\omega$ and $f_{uv}(\omega)=0$.
    
    To conclude, we note that if every edge of $G(s)$ is up or every edge of $G(s)$ is down then $f_{uv} = 2$ (if $d(s) = 0$) or $f_{uv} = 1$ (if $d(s) = 1$), and this happens with probability $2^{1-\abs{s_1}}$. \qedhere
\end{proof}

The last ingredient of the proof is the following lemma, which shows that the contribution of $\mathcal{S}_{\geqslant 2}$ to \eqref{eq:Linusson_expansion} is negligible compared to that of $\mathcal{S}_{\leqslant 1}$ when $p$ is close to $1$. Note that $\mathbb{P}_p(S =s \mid T=t)$ does not depend on the value of $t$ for each $s\in \mathcal{S}$, but that the sets $\mathcal{S}_{\infty}$, $\mathcal{S}_{\leqslant 1}$, and $\mathcal{S}_{\geqslant 2}$ depend on $T$ so that e.g.\ $\mathbb{P}_p(S \in \mathcal{S}_{\geqslant 2} \mid T=t)$ may depend on $t$.

\begin{lemma}\label{lem:Sminus}
    The inequality $\probm{\mathrm{bb}}{p}{S\in \mathcal{S}_{\geqslant 2} \mid T=t} \leqslant 2^{-\abs{E}} \cdot \probm{\mathrm{bb}}{p}{S\in \mathcal{S}_{\leqslant 1}\mid T=t}$ holds for every $p \geqslant 1-2^{-\abs{E}/2-2}$, with strict inequality if $\probm{\mathrm{bb}}{p}{S\in \mathcal{S}_{\leqslant 1}\mid T=t}$ is positive.
\end{lemma}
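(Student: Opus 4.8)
The plan is to build a map from $\mathcal{S}_{\geqslant 2}$ to $\mathcal{S}_{\leqslant 1}$ that contracts away the ``excess'' structure at a small, controllable cost in probability. For each $s\in\mathcal{S}_{\geqslant 2}$ fix, once and for all by a deterministic rule (say lexicographically least), a shortest vertical-free path $P(s)$ in $G(s)$ from $\pi_s(u)$ to $\pi_s(v)$; it has $d(s)\geqslant 2$ edges, and since the edges of $G(s)$ are precisely the edges of $s_1$, all of these edges lie in $s_1$. Let $\Phi(s)$ be the tripartition obtained from $s$ by moving the edges of $P(s)$ from $s_1$ into $s_2$. I would first check that $\Phi(s)\in\mathcal{S}_{\leqslant 1}$: writing $P(s)$ as $x_0 x_1\cdots x_{d(s)}$ with $x_0=\pi_s(u)$ and $x_{d(s)}=\pi_s(v)$, additionally contracting its edges identifies $x_0,\dots,x_{d(s)}$ to a single vertex of $G(\Phi(s))$, and since none of the $x_i$ lies in $\pi_s(t)$ this vertex does not lie in $\pi_{\Phi(s)}(t)$; hence $\pi_{\Phi(s)}(u)=\pi_{\Phi(s)}(v)\notin\pi_{\Phi(s)}(t)$ and $d(\Phi(s))=0$.

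Next comes the probability bookkeeping. Moving one edge from $s_1$ to $s_2$ multiplies $\probm{\mathrm{bb}}{p}{S=\cdot}$ by $p^2/(2p(1-p))=p/\bigl(2(1-p)\bigr)$, so $\probm{\mathrm{bb}}{p}{S=s}=\bigl(\tfrac{2(1-p)}{p}\bigr)^{d(s)}\probm{\mathrm{bb}}{p}{S=\Phi(s)}$. With $q=1-p$, the hypothesis $p\geqslant 1-2^{-\abs{E}/2-2}$ gives $\tfrac{2(1-p)}{p}=\tfrac{2q}{1-q}\leqslant \tfrac{2\cdot 2^{-\abs{E}/2-2}}{1-2^{-\abs{E}/2-2}}\leqslant \tfrac{2}{3}\,2^{-\abs{E}/2}$ (using $2^{-\abs{E}/2-2}\leqslant\tfrac14$), so since $d(s)\geqslant 2$ we obtain $\probm{\mathrm{bb}}{p}{S=s}\leqslant \tfrac{4}{9}\,2^{-\abs{E}}\,\probm{\mathrm{bb}}{p}{S=\Phi(s)}$.

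Summing over $s\in\mathcal{S}_{\geqslant 2}$ and grouping terms by the value of $\Phi$ gives
\[
\probm{\mathrm{bb}}{p}{S\in\mathcal{S}_{\geqslant 2}\mid T=t}=\sum_{s''\in\mathcal{S}_{\leqslant 1}}\probm{\mathrm{bb}}{p}{S=s''}\sum_{s\in\Phi^{-1}(s'')}\Bigl(\tfrac{2q}{1-q}\Bigr)^{d(s)},
\]
so it suffices to show that the inner ``fibre sum'' is at most $2^{-\abs{E}}$ for every $s''\in\mathcal{S}_{\leqslant 1}$, strictly so whenever $\Phi^{-1}(s'')\neq\varnothing$ (which, together with positivity of the right-hand side, yields the strictness clause). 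A preimage $s$ of $s''$ is obtained by selecting the set $B\subseteq s_2''$ of edges of $P(s)$ to return to $s_1$, and one checks that such a $B$ is forced to lie inside the $s_2''$-component of $u$ (which must therefore also be the component of $v$) and to be realisable as the edge set of a shortest vertical-free path after un-contraction. Enumerating preimages by $j=\abs{B}=d(s)\geqslant 2$ and using $\bigl(\tfrac{2q}{1-q}\bigr)^{j}\leqslant\bigl(\tfrac23\bigr)^{j}2^{-j\abs{E}/2}$, the contribution of cardinality $j$ is at most (the number of admissible $B$ of size $j$) times $\bigl(\tfrac23\bigr)^{j}2^{-j\abs{E}/2}$, a quantity that decays geometrically once $j\geqslant 2$.

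The main obstacle is precisely this last point: controlling the fibres of $\Phi$. The per-edge gain $2(1-p)/p\leqslant\tfrac23 2^{-\abs{E}/2}$ is safely small, but a single $s''$ can have many preimages — essentially one for each ``reconnection path'' threading its contracted component — so one cannot simply argue ``one preimage, factor $\tfrac49 2^{-\abs{E}}$.'' The heart of the matter is a purely combinatorial estimate: that these reconnection paths, weighted by $\bigl(2q/(1-q)\bigr)^{d(s)}$, together contribute at most $2^{-\abs{E}}$. This uses that they all live inside a single contracted subgraph and that the weight decays in the path length fast enough to absorb the number of such paths, and it is here that the precise form of the threshold on $p$ is calibrated; the remaining inequalities are then elementary.
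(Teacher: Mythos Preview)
Your setup is exactly the paper's: move a vertical-free path of length $d(s)\geqslant 2$ from $s_1$ into $s_2$ to land in $\mathcal{S}_{\leqslant 1}$, pick up the factor $\bigl(2(1-p)/p\bigr)^{d(s)}$, and then control the resulting fibre sums. The difference is that you stop precisely where the work is, labelling it ``the main obstacle'' and asserting the remaining inequalities are elementary. As written this is a genuine gap: ``decays geometrically once $j\geqslant 2$'' is not a proof, and in fact the $j=2$ term alone, bounded the way you suggest, is of order $\abs{E}^2\cdot 2^{-\abs{E}}$, which does not beat $2^{-\abs{E}}$ without further care.

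The fix is simpler than you seem to expect. You do not need to characterise which $B\subseteq s_2''$ are ``admissible'' as reconnection paths, nor do you need $\Phi$ to be a deterministic function. Drop the lexicographic rule and simply observe that every $s\in\mathcal{S}_{\geqslant 2}$ can be written (not necessarily uniquely) as $(s_0'',\,s_1''\cup A,\,s_2''\setminus A)$ for some $s''\in\mathcal{S}_{\leqslant 1}$ and some $A\subseteq s_2''$ with $\abs{A}\geqslant 2$. Overcounting crudely by \emph{all} such pairs $(s'',A)$ gives, with $x=2(1-p)/p$,
\[
\probm{\mathrm{bb}}{p}{S\in\mathcal{S}_{\geqslant 2}\mid T=t}\;\leqslant\;\Biggl[\sum_{k\geqslant 2}\binom{\abs{E}}{k}x^{k}\Biggr]\,\probm{\mathrm{bb}}{p}{S\in\mathcal{S}_{\leqslant 1}\mid T=t}.
\]
Now the bracket is $(1+x)^{\abs{E}}-1-\abs{E}x$, and a second-order Taylor bound (using $\abs{E}x\leqslant 1$, which your hypothesis on $p$ implies) gives at most $2e\bigl((1-p)/p\bigr)^2$. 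The threshold $p\geqslant 1-2^{-\abs{E}/2-2}$ is calibrated exactly so that this last quantity is at most $2^{-\abs{E}}$; checking this is a one-line elementary inequality. So your plan is sound and matches the paper's, but the proof is only complete once you replace the fibre discussion with this overcount and Taylor estimate.
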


\begin{proof}
    If $s\in \mathcal{S}_{\geqslant 2}$ then there exists a set $A = \brac{e_1, \ldots, e_k} \subseteq s_1$ such that $\pi_s(A)$ is a vertical-free path in $G(s)$ connecting $\pi_s(u)$ and $\pi_s(v)$ with $k\geqslant 2$. As such, if we define a tripartition $s'$ by $s' = \brap{s_0, s_1 \setminus A, s_2' = s_2 \cup A}$ then $s' \in \mathcal{S}_{\leqslant 1}$. Note that the construction of $s'$ from $s$ depends on the choice of the set $A$, which may not be unique.
    It follows that every $s\in \mathcal{S}_{\geqslant 2}$ can be written (perhaps non-uniquely) in the form $(s'_0,s'_1 \cup A, s'_2 \setminus A)$ where $s'\in \mathcal{S}_{\leqslant 1}$ and $A \subseteq s'_2$ is non-empty. Hence
    \begin{align}
    \probm{\mathrm{bb}}{p}{S\in \mathcal{S}_{\geqslant 2} \mid T=t}
    &\leqslant \sum_{s'\in \mathcal{S}_{\leqslant 1}} \sum_{k\geqslant 2}\probm{\mathrm{bb}}{p}{S =s'} \binom{\abs{s_2'}}{k} \brap{\frac{2p(1-p)}{p^2}}^k \nonumber\\
    &\leqslant \bras{\sum_{k\geqslant 2} \binom{\abs{E}}{k} \brap{\frac{2p(1-p)}{p^2}}^k} \bras{\sum_{s'\in \mathcal{S}_{\leqslant 1}} \probm{\mathrm{bb}}{p}{S =s'}} \nonumber\\
    &= \bras{f\brap{\abs{E}\frac{2(1-p)}{p}} - f(0)} \probm{\mathrm{bb}}{p}{S\in \mathcal{S}_{\leqslant 1}\mid T=t}.
    \label{eq:SminusSplusratio}
    \end{align}
    where $f(x) = \brap{1+\frac{x}{\abs{E}}}^{\abs{E}}-x$. Observe that $f'(0) = 0$ and $f''(x) = \brap{1+x}^{\abs{E}-2}$ for $x \geqslant 0$ so if
    \begin{equation}
    \label{eq:calculus}
    \frac{2(1-p)}{p} \leqslant \frac{1}{\abs{E}} \qquad \text{then} \qquad \bras{f\brap{\frac{2(1-p)}{p}} - f(0)} \leqslant \frac{1}{2}\brap{\frac{2(1-p)}{p}}^2\brap{1+\frac{1}{\abs{E}}}^{\abs{E}-2} < 2e\brap{\frac{1-p}{p}}^2.
    \end{equation}
    It follows from \eqref{eq:SminusSplusratio} and \eqref{eq:calculus} that we have the implications
    \begin{align}
    p \geqslant \frac{1}{1+\brap{2e}^{-1/2} \cdot 2^{-\abs{E}/2}} \quad &\Rightarrow \quad \brap{\bras{f\brap{\frac{2(1-p)}{p}} - f(0)} < 2e\brap{\frac{1-p}{p}}^2 \leqslant 2^{-\abs{E}} }
    \nonumber\\
    &\Rightarrow \quad \brap{\probm{\mathrm{bb}}{p}{S\in \mathcal{S}_{\geqslant 2}\mid T=t} \leqslant 2^{-\abs{E}} \probm{\mathrm{bb}}{p}{S\in \mathcal{S}_{\leqslant 1}\mid T=t}},\end{align}
    and the claim follows by verifying that the elementary inequality $\frac{1}{1+\brap{2e}^{-1/2} x} \leqslant 1-\frac{1}{4}x$ holds for every $x\in [0,1]$. Moreover, strict inequality is easily seen to hold when $\probm{\mathrm{bb}}{p}{S\in \mathcal{S}_{\leqslant 1}\mid T=t}$ is positive.
\end{proof}

We are now ready to complete the proof of Theorem~\ref{thm:mainprime} and hence of Theorem~\ref{thm:main}.

\begin{proof}[Proof of Theorem~\ref{thm:mainprime}]
Substituting the estimates of Lemmas \ref{lem:mirroring}, \ref{lem:Sminus}, and \ref{lem:Splus} into the expansion \eqref{eq:Linusson_expansion} yields that
\begin{align}
    &\probm{\mathrm{bb}}{p}{u_0 \leftrightarrow v_0 \mid T=t} - \probm{\mathrm{bb}}{p}{u_0 \leftrightarrow v_1 \mid T=t} \nonumber\\
    &\hspace{5cm}= 
    \hfsetfillcolor{lightyellow}\tikzmarkin{c1}(0,-1.3)(0,0.6) \underbrace{\sum_{s \in \mathcal{S}_{\infty}} \probm{\mathrm{bb}}{p}{S=s}F(s)}_{\hspace{0.5em}=0^{\phantom{2}}} \tikzmarkend{c1} + 
    \hfsetfillcolor{lightgreen}\tikzmarkin{c2}(0,-1.3)(0,0.6) \underbrace{\sum_{s \in \mathcal{S}_{\leqslant 1}} \probm{\mathrm{bb}}{p}{S=s}F(s)}_{\geqslant 2^{1-|E|} \cdot \probm{\mathrm{bb}}{p}{S \in \mathcal{S}_{\leqslant 1}}} \tikzmarkend{c2} +
    \hfsetfillcolor{lightred}\tikzmarkin{c3}(0,-1.3)(0,0.6) \underbrace{\sum_{s \in \mathcal{S}_{\geqslant 2}} \probm{\mathrm{bb}}{p}{S=s}F(s)}_{\geqslant -2 \cdot \probm{\mathrm{bb}}{p}{S \in \mathcal{S}_{\geqslant 2}}} \tikzmarkend{c3} \phantom{\Biggr]}
    \nonumber\\
    &\hspace{5cm}\geqslant 2 \brap{2^{-|E|}\probm{\mathrm{bb}}{p}{S \in \mathcal{S}_{\leqslant 1} \mid T=t} - \probm{\mathrm{bb}}{p}{S \in \mathcal{S}_{\geqslant 2} \mid T=t}} \geqslant 0,\phantom{\Biggr]}
\label{eq:final}
\end{align}
where we have used Lemma \ref{lem:Sminus} in the final inequality. 
% which proves the bunkbed conjecture when $p$ is close to $1$.
Lemma \ref{lem:Sminus} also implies that the final inequality is strict whenever $\probm{\mathrm{bb}}{p}{S \in \mathcal{S}_{\leqslant 1} \mid T=t}>0$, which is easily seen to be the case if and only if there is a vertical-free path between $u$ and $v$ in $G$ as claimed.
\end{proof}

\begin{remark}
Using sharper inequality $\frac{1}{1+\brap{4e}^{-1/2} x} \leqslant 1-\frac{1}{4}x$ for every $x\in [0,2/3]$, the analysis of Lemma \ref{lem:Sminus} yields that $\probm{\mathrm{bb}}{p}{S \in \mathcal{S}_{\geqslant 2}} \leqslant 2^{-\abs{E}-1}\probm{\mathrm{bb}}{p}{S \in \mathcal{S}_{\leqslant 1}}$, so that \eqref{eq:final} yields that
\begin{equation}
    \probm{\mathrm{bb}}{p}{u_0 \leftrightarrow v_0 \mid T=t} - \probm{\mathrm{bb}}{p}{u_0 \leftrightarrow v_1 \mid T=t} \geqslant 2^{-\abs{E}} \cdot \probm{\mathrm{bb}}{p}{S \in \mathcal{S}_{\leqslant 1} \mid T=t}.
\end{equation}
If $u$ and $v$ are connected in $V \setminus t$ then $\probm{\mathrm{bb}}{p}{S \in \mathcal{S}_{\leqslant 1} \mid T=t}$ is positive, and indeed there must exist a tripartition $s\in \mathcal{S}_+$ with $s_0=\emptyset$. Indeed, to construct such an $s$, take a simple vertical-free path in $G$ from $u$ to $v$, include all edges in $s_2$, and include every other edge of $G$ in $s_1$. Since $p\geqslant 2/3$, it follows that the probability $S$ belongs to $\mathcal{S}_{\leqslant 1}$ is at least $(2(1-p)p)^{\abs{E}}$  and hence that if $u$ and $v$ are connected in $V \setminus t$ then
\begin{equation}
\mathbb{P}_p^\mathrm{bb}(u_0 \leftrightarrow v_0 \mid T=t) - \mathbb{P}_p^\mathrm{bb}(u_0 \leftrightarrow v_1 \mid T=t) \geqslant p^{\abs{E}}(1-p)^{\abs{E}}
\end{equation}
for every $p\geqslant 1-2^{-\abs{E}/2-2}$.
\end{remark}

\bibliographystyle{abbrv}
\bibliography{bunkbed.bib}

\end{document}